\newtheorem{theorem}{Theorem}[section]
\newtheorem*{theorem*}{Theorem}
\newtheorem{lemma}[theorem]{Lemma}
\newtheorem{proposition}[theorem]{Proposition}
\newtheorem{corollary}[theorem]{Corollary}
\newtheorem*{conjecture*}{Conjecture}
\newtheorem*{question*}{Question}
\theoremstyle{remark}
\theoremstyle{definition}
\newtheorem{definition}[theorem]{Definition}
\newcommand{\dimv}{\underline{\dim}\,}
\newcommand{\ch}{{\mathcal H}}
\newcommand{\cp}{{\mathcal P}}
\numberwithin{equation}{section}
\begin{document}

\title[Ringel--Hall algebra of $\Lambda(n-1,1,1)$]{On the Ringel--Hall algebra of the gentle one-cycle algebra $\Lambda(n-1,1,1)$}

\author{Hui Chen, and Dong Yang}

\address{Hui Chen, School of Biomedical Engineering and Informatics, Nanjing Medical University, Nanjing 211166, P. R. China.}
\email{huichen@njmu.edu.cn}

\address{Dong Yang, Department of Mathematics, Nanjing University, Nanjing 210093, P. R. China}
\email{yangdong@nju.edu.cn}

\begin{abstract} It is shown that the gentle one-cycle algebra $\Lambda(n-1,1,1)$ has Hall polynomials. The Hall polynomials are explicitly given for all triples of indecomposable modules, and as a consequence, the Ringel--Hall Lie algebra of $\Lambda(n-1,1,1)$ is shown to be isomorphic to its Riedtmann Lie algebra.\\
\noindent{Key words: Ringel--Hall algebra, Hall polynomial, gentle one-cycle algebra}\\
\noindent MSC2020: 16G20, 17B37.
\end{abstract}
\maketitle

\section{Introduction}
\label{makereference:intro}

Let $A$ be a finite-dimensional algebra over a finite field $\mathbb{F}_q$. The \emph{Ringel--Hall algebra} $\ch(A)$ of $A$ is the $\mathbb{Z}$-algebra with basis the set of isomorphism classes of finite-dimensional left $A$-modules and the corresponding structure constants given by filtration numbers. Assume that $A$ is representation-finite, i.e., there are only finitely many isomorphism classes of finite-dimensional $A$-modules. Then $A$ is said to have Hall polynomials if, roughly speaking, the filtration numbers are the evaluation at $q$ of some integral polynomials. In this case, we can view $q$ as an indeterminate and evaluate $\mathcal{H}(A)$ at $q=1$. Then the $\mathbb{Z}$-submodule $\mathfrak{g}(A)$ of $\ch(A)$ spanned by the isoclasses of indecomposable $A$-modules is a Lie subalgebra. In this way, the positive parts of finite-dimensional simple complex Lie algebras can be constructed from the representation theory of Dynkin quivers. See \cite{Ringel88,Ringel90,Ringel92}.

It is proved in \cite{Ringel88} that Hall polynomials exist for representation-finite algebras with direct Auslander--Reiten quiver and in general this is an open problem. In this paper we consider the gentle one-cycle algebra $A$ given by the quiver with relation
\begin{equation*}
\xymatrix@R=1ex{
1 \ar[r]&2\ar[r]&\cdots\ar[r]&n\ar@(ur,dr)^{\alpha}},\qquad \alpha^2,
\end{equation*}
whose Auslander--Reiten quiver is not directed. 
Over an algebraically closed field this is the derived-discrete algebra $\Lambda(n-1,1,1)$ in the notation of \cite{BobinskiGeissSkowronski04}.% and over $\mathbb{C}$ it appeared in the context of $B$-actions of $2$-nilpotent matrices \cite{BoosReineke12}. 
Our first main result is

\begin{theorem}
[{Theorem~\ref{Hall-exist-for-A}}]
$A$ has Hall polynomials.
\end{theorem}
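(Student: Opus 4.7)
The plan is to reduce the theorem to an explicit combinatorial check by first classifying all indecomposable $A$-modules. Since $A$ is gentle, the Butler--Ringel theorem describes its indecomposables as string modules and band modules. The only candidate for a band is the loop $\alpha$ itself, but the relation $\alpha^{2}=0$ obstructs this (a band must remain nonzero in all positive powers), so $A$ has no band modules and is in fact representation-finite. The indecomposables are therefore enumerated by a finite set of strings (the linear subpaths $[i,j]$ of the $A_{n}$-part, plus the finitely many strings incorporating $\alpha$ once), and this enumeration is independent of the ground field. It thus suffices to show that for each fixed triple of strings $(w_{L},w_{M},w_{N})$ the filtration number $F_{M(w_{M})\,M(w_{N})}^{M(w_{L})}$ is the evaluation at $q$ of a polynomial in $\mathbb{Z}[T]$.

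\textbf{Riedtmann's formula.} I would then apply Riedtmann's formula
\begin{equation*}
F_{MN}^{L}\;=\;\frac{|\Ext^{1}_{A}(M,N)_{L}|\cdot|\Aut_{A}(L)|}{|\Hom_{A}(M,N)|\cdot|\Aut_{A}(M)|\cdot|\Aut_{A}(N)|}
\end{equation*}
and check polynomiality of each factor separately. By Crawley-Boevey's combinatorial Hom/Ext formulas for string algebras, $\dim_{\mathbb{F}_{q}}\Hom(M(w),M(v))$ and $\dim_{\mathbb{F}_{q}}\Ext^{1}(M(w),M(v))$ are integers depending only on $w$ and $v$; hence $|\Hom|$ and $|\Ext^{1}|$ are monomials in $q$. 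Each indecomposable $M(w)$ has local endomorphism ring with residue field $\mathbb{F}_{q}$ (string modules are absolutely indecomposable), so $|\Aut(M(w))|=(q-1)\,q^{\dim\End M(w)-1}$; and for a direct sum $L=\bigoplus_{i}M(w_{i})^{m_{i}}$ the standard formula for the automorphism group of a direct sum of indecomposables yields $|\Aut(L)|$ as a polynomial in $q$ built from the $|\GL_{m_{i}}(\mathbb{F}_{q})|$ and the dimensions of the cross-Hom-spaces.

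\textbf{Main obstacle.} The technical core of the argument is to show that for each fixed $L$ the stratum $\Ext^{1}(M,N)_{L}\subseteq\Ext^{1}(M,N)$ of extensions with middle term isomorphic to $L$ has cardinality polynomial in $q$. I would rely on the explicit description of short exact sequences between string modules (Schr\"oer, Crawley-Boevey): every class in $\Ext^{1}(M(w_{M}),M(w_{N}))$ is parametrised by certain admissible pairs of substrings together with a scalar, and the indecomposable decomposition of the middle term can be read off algorithmically from this data. The stratum $\Ext^{1}(M,N)_{L}$ is thereby cut out in an affine $\mathbb{F}_{q}$-space by finitely many $\mathbb{F}_{q}$-linear conditions (and ``scalar $=0$'' versus ``scalar $\neq 0$'' conditions), so its cardinality is manifestly a polynomial in $q$. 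The bookkeeping is most delicate for the strings that traverse the loop, where vertex $n$ carries multiplicity two and extensions involve non-trivial gluing at $\alpha$; however, because $\alpha^{2}=0$ forces every string to visit the loop at most once, this case analysis is finite and explicit, and carrying it out fully will produce the Hall polynomials $\phi_{MN}^{L}(T)\in\mathbb{Z}[T]$ asserted by the theorem.
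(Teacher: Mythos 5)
Your proposal takes a genuinely different route from the paper, which never touches Riedtmann's formula: the paper first invokes Theorem~\ref{Hall-exist} to reduce to the case where only the first argument $X$ is indecomposable, and then argues by induction, writing each indecomposable $[X]$ as a $\mathbb{Z}[q,q^{-1}]$-combination of two-term products of simples, projectives and previously treated indecomposables in $\ch(A)$, so that Lemma~\ref{compo} together with Ringel's integrality lemma yields the polynomial. Measured against that, your argument has two genuine gaps. First, the reduction: the definition requires a polynomial for \emph{every} triple of modules, and the only reduction result available (Theorem~\ref{Hall-exist}) still leaves $Y$ and $M$ arbitrary. Your claim that it suffices to treat triples of strings, i.e.\ triples of indecomposables, is not justified by anything you cite and is not a formal consequence of the indecomposable case; and while Riedtmann's formula itself holds for decomposable $M$ and $N$, the ``admissible pairs of substrings plus a scalar'' description of $\Ext^1$ that your third paragraph leans on is a statement about two string modules, so the case of a decomposable second argument $N$ --- which Theorem~\ref{Hall-exist} does not let you avoid --- is simply not covered.

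Second, and more seriously, the assertion that the stratum $\Ext^1(M,N)_L$ is ``cut out by finitely many $\mathbb{F}_q$-linear conditions, so its cardinality is manifestly a polynomial in $q$'' is exactly the point at issue and cannot be taken for granted. These strata are the fibres of the map sending an extension class to the isomorphism class of its middle term; they are constructible subsets (unions of pieces of $\Aut(M)\times\Aut(N)$-stable sets), not linear subspaces, and the polynomiality of their point counts for representation-finite algebras is precisely the open problem recalled in the introduction --- if it were manifest, the general conjecture would be trivial. For this particular algebra the count can indeed be carried out, but doing so amounts to the explicit submodule counts of Proposition~\ref{Hallpoly} (note that non-trivial powers of $q$ genuinely occur, e.g.\ $\varphi_{W_{i,j}U_{j+1,l}}^{U_{i,l}}=T$, so the answer is not forced by dimension counts alone). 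As it stands, your proposal defers the entire content of the theorem to this unproved step, whereas the paper's inductive argument needs only the handful of elementary two-term products displayed in its proof.
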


Thus there is the Lie subalgebra $\mathfrak{g}(A)$ of $\ch(A)$ (evaluated at $q=1$) of indecomposable $A$-modules. On the other hand, following Riedtmann's approach \cite{Riedtmann94} to consider the representation theory of $A$ over $\mathbb{C}$, we can obtain another Lie algebra $L(A_{\mathbb{C}})$, see \cite{ChenYang24}. Our second main result is

\begin{theorem}[{Theorem~\ref{thm:iso-between-g(A)-and-L(A)}}]
$\mathfrak{g}(A)$ and $L(A_{\mathbb{C}})$ are isomorphic Lie algebras.
\end{theorem}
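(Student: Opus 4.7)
The plan is to identify $\mathfrak{g}(A)$ and $L(A_\mathbb{C})$ basis-by-basis and then to match brackets termwise, using Theorem~\ref{Hall-exist-for-A} and the comparison framework from \cite{ChenYang24}.

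First I would set up the identification of the underlying $\mathbb{Z}$-modules. Since $A$ is representation-finite and gentle, its indecomposable modules over any field are parametrized combinatorially by string data in the bound quiver; in particular there is a canonical bijection between the isomorphism classes of indecomposable $A$-modules over $\mathbb{F}_q$ and those of $A_\mathbb{C}$-modules. This bijection identifies the free $\mathbb{Z}$-modules underlying $\mathfrak{g}(A)$ and $L(A_\mathbb{C})$, producing the candidate isomorphism on the nose.

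Next I would compare brackets on this shared basis. For indecomposables $M$ and $N$,
\begin{equation*}
[M,N]_{\mathfrak{g}(A)} = \sum_L \bigl(\varphi^L_{M,N}(1)-\varphi^L_{N,M}(1)\bigr)[L],
\end{equation*}
where the sum runs over indecomposables and the $\varphi^L_{M,N}$ are the Hall polynomials constructed in the proof of Theorem~\ref{Hall-exist-for-A}. On the other side, the Riedtmann bracket $[M,N]_{L(A_\mathbb{C})}$ has coefficients that are geometric counts (Euler characteristics of certain extension strata) for the $\mathbb{C}$-algebra $A_\mathbb{C}$. The bridge is the comparison result of \cite{ChenYang24}: whenever a Hall polynomial exists, its value at $q=1$ agrees with the corresponding Riedtmann coefficient. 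Applying this termwise and feeding in the explicit formulas from Theorem~\ref{Hall-exist-for-A} yields the desired equality of brackets and hence the Lie algebra isomorphism.

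The main obstacle I expect is justifying that the Chen--Yang comparison \cite{ChenYang24} does apply to every triple of indecomposables for $\Lambda(n-1,1,1)$, because the Auslander--Reiten quiver of $A$ is not directed and so Riedtmann's original argument in \cite{Riedtmann94} does not suffice. Once that comparison is in force, what remains is a case-by-case verification driven by the explicit list of Hall polynomials from Theorem~\ref{Hall-exist-for-A}: the vast majority of triples yield vanishing brackets on both sides, while the nonzero brackets come from a short list of string configurations near the loop at vertex $n$, each of which can be matched directly to the corresponding extension stratum over $\mathbb{C}$. A secondary subtlety is ruling out hidden contributions from extension components that do not contain an $\mathbb{F}_q$-rational point for all $q$; here the explicit rationality of the indecomposables involved (all string modules defined over the prime field) is what makes the Hall polynomial at $q=1$ the correct geometric count.
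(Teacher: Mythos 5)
There is a genuine gap: the bridge you rely on does not exist in the cited form. You invoke ``the comparison result of \cite{ChenYang24}: whenever a Hall polynomial exists, its value at $q=1$ agrees with the corresponding Riedtmann coefficient,'' but \cite{ChenYang24} contains no such general principle --- what it provides (Proposition 4.2 there) is only the \emph{explicit list of structure constants} of $L(A_{\mathbb{C}})$, computed directly from the representation theory of $A_{\mathbb{C}}$. A general ``evaluate at $q=1$ equals Euler-characteristic count'' theorem is exactly the kind of statement that Riedtmann's directedness hypothesis is needed for, and you correctly observe that the Auslander--Reiten quiver of $\Lambda(n-1,1,1)$ is not directed; so the obstacle you flag as ``the main obstacle'' is not a technicality to be checked but the missing content of the argument. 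As written, your proof is conditional on a lemma you cannot supply, and invoking it is close to circular, since the agreement of the two sets of structure constants is precisely what the theorem asserts.

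The paper's actual route avoids any abstract comparison: it computes the Hall polynomials $\varphi^M_{XY}$ for \emph{all} triples of indecomposables explicitly (Proposition~\ref{Hallpoly} --- note that Theorem~\ref{Hall-exist-for-A} only gives existence, not the formulas you propose to ``feed in''), evaluates at $T=1$ to get the complete bracket table of $\mathfrak{g}(A)$ (Corollary~\ref{coro:lie-brackets-of-g(A)}), and then observes that this table coincides, entry by entry under the field-independent parametrization of indecomposables from Theorem~\ref{repn}, with the table for $L(A_{\mathbb{C}})$ from \cite[Proposition 4.2]{ChenYang24}. To repair your proposal, replace the appeal to a comparison theorem by this two-sided explicit computation: the basis identification you describe is fine, but the equality of brackets must be verified case by case from the two finite lists, not deduced from a general $q\to 1$ principle. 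Your closing remark about $\mathbb{F}_q$-rational points is likewise moot in this framework, since the Hall numbers are exact submodule counts over conservative extensions rather than point counts of geometric strata.
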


This is done by brutal force, namely, we compute the Hall polynomials for all triples of indecomposable $A$-modules (Proposition~\ref{Hallpoly}), and as a consequence we obtain the structure constants of $\mathfrak{g}(A)$ (Corollary~\ref{coro:lie-brackets-of-g(A)}), which are the same as those of $L(A_{\mathbb{C}})$ given in \cite{ChenYang24}.

\subsection*{Acknowledgement} The authors are very grateful to Bangming Deng and Haicheng Zhang for answering their questions. The second author acknowledges support by the National Natural Science Foundation of China No.12031007.

\section{Preliminaries}

In this section we recall some basic results on Ringel--Hall algebras and on the classification of indecomposable modules over the algebra $\Lambda(n-1,1,1)$.
\subsection{Ringel--Hall algebras}
\label{s:RHA}

Let $K=\mathbb{F}_q$ be a finite field, and $A$ be an associative unital finite-dimensional representation-finite $K$-algebra. By an $A$-module we will mean a finite-dimensional left $A$-module.  For an $A$-module $M$ we denote by $[M]$ its isomorphism class. 
We denote by $\mathcal{P}(A)$ the set of all isomorphism classes of $A$-modules. %For any $N_1, \ldots, N_t, M \in$ mod-$A$, define $\mathcal{F}_{N_1, \ldots, N_t}^M$ to be the set of filtrations
%\[M=U_0\supseteq U_1\supseteq\ldots\supseteq U_{t-1}\supseteq U_t=0\]
%such that $U_{i-1}/U_i\cong N_i$ for all $i=1,\ldots, t$. In particular $\mathcal{F}_{N_1N_2}^M:=\{U\subseteq M:M/U\cong N_1, U\cong N_2\}$ and $F_{N_1N_2}^M:=|\mathcal{F}_{N_1N_2}^M|$. Then $F_{N_1N_2}^M$ is called Hall number.

For three $A$-modules $N_1,N_2$ and $M$, let $F_{N_1N_2}^M$ be the \emph{Hall number}, i.e., the number of submodule $U$ of $M$ such that $U\cong N_2$ and $M/U\cong N_1$.
Let $\mathcal{H}(A)$ be the free $\mathbb{Z}$-module with basis $\mathcal{P}(A)$, and for any $[N_1], [N_2], [M]\in \mathcal{P}(A)$ define
\[[N_1]\diamond [N_2]=\sum_{[M]\in\mathcal{P}(A)}F_{N_1N_2}^M [M].\]

\begin{proposition}[\cite{Ringel90}]
$\mathcal{H}(A)$ is an associative $\mathbb{Z}$-algebra with unit $[0]$, the isoclass of the zero module.
\end{proposition}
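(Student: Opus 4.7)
The plan is to verify the two pieces packaged into the proposition separately: that $[0]$ is a two-sided unit for the product $\diamond$, and that $\diamond$ is associative. Since the product is defined on the $\mathbb{Z}$-basis $\mathcal{P}(A)$ and extended bilinearly, both claims reduce to identities on basis elements.

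For the unit, I would unpack the definition of the Hall number. For any $A$-modules $M$ and $X$, the count $F_{0M}^X$ records submodules $U\subseteq X$ with $U\cong M$ and $X/U\cong 0$, which forces $U=X$ and hence $[X]=[M]$. Thus $F_{0M}^X=\delta_{[X],[M]}$; the symmetric count (taking $U=0$) gives $F_{M0}^X=\delta_{[X],[M]}$. Substituting these into the definition of $\diamond$ yields $[0]\diamond[M]=[M]=[M]\diamond[0]$ at once.

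For associativity, I would compare the two triple products
\[
([N_1]\diamond[N_2])\diamond[N_3]=\sum_{[M],[X]}F_{N_1N_2}^{M}\,F_{MN_3}^{X}\,[X]
\]
and
\[
[N_1]\diamond([N_2]\diamond[N_3])=\sum_{[L],[X]}F_{N_2N_3}^{L}\,F_{N_1L}^{X}\,[X],
\]
and show that for each fixed $[X]$ the two coefficients count the same set, namely the set of filtrations $0\subseteq V\subseteq W\subseteq X$ with $V\cong N_3$, $W/V\cong N_2$ and $X/W\cong N_1$. On the left, $F_{MN_3}^{X}$ selects $V\subseteq X$ with $V\cong N_3$ and $X/V\cong M$, and then $F_{N_1N_2}^{M}$ selects a submodule of $M$; Noether's third isomorphism theorem converts this to a choice of $W$ with $V\subseteq W\subseteq X$, $W/V\cong N_2$ and $X/W\cong N_1$. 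On the right, one first picks $W\subseteq X$ with $W\cong L$ and $X/W\cong N_1$, then picks $V\subseteq W$ with $V\cong N_3$ and $W/V\cong N_2$. Both descriptions enumerate the same set of flags, so the coefficients agree.

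The argument as a whole is a standard double-counting bijection and has no genuinely hard step; the only thing that requires care is the index bookkeeping, so that the three subquotients $N_1,N_2,N_3$ land in the correct positions of the filtration on both sides and the intermediate symbols $M$ and $L$ are correctly eliminated.
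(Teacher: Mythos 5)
Your argument is correct and is exactly the standard double-counting proof (unit via the degenerate Hall numbers $F_{0M}^X=F_{M0}^X=\delta_{[X],[M]}$, associativity by identifying both triple-product coefficients with the number of flags $0\subseteq V\subseteq W\subseteq X$ having prescribed subquotients, using the third isomorphism theorem). The paper itself gives no proof and simply cites Ringel, whose argument is the one you reproduce, so there is nothing to flag beyond the routine remark that the sums defining $\diamond$ are finite because the modules involved are finite-dimensional.
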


The algebra $\ch(A)$ is called the \emph{Ringel--Hall algebra} of $A$. The following lemma is an immediate consequence of \cite[Lemma 3.5]{KasjanKosakowska22} (see also \cite[Lemma 2.1]{Nasr12}).

\begin{lemma}\label{compo}
Let $X, X_1, X_2, X_3, X_4, Y, M$ be $A$-modules (possibly 0), $a, b\in \mathbb{C}$ and assume that $[X]=a[X_1]\diamond [X_2]+b[X_3]\diamond [X_4]$ in $\mathcal{H}(A)$, then
%\[F_{XY}^M=aF_{X_1X_2Y}^M+bF_{X_3X_4Y}^M.\]
\[
F_{XY}^M=a\sum_{Z\in\mathcal{P}(A)}F_{X_1 Z}^MF_{X_2 Y}^Z+b\sum_{Z\in\mathcal{P}(A)}F_{X_3 Z}^M F_{X_4 Y}^Z.
\]
\end{lemma}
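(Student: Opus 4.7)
The plan is to deduce the identity purely from the associativity of the Ringel--Hall algebra product and the definition of the Hall numbers as structure constants. I would work in the $\mathbb{C}$-algebra $\mathcal{H}(A) \otimes_{\mathbb{Z}} \mathbb{C}$ so that the scalars $a, b$ are meaningful; associativity persists after extension of scalars.

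The first step is to record the formula for a threefold product. For any $A$-modules $U, V, Y$, the definition gives
\[
([U]\diamond[V])\diamond[Y] = \sum_Z F^Z_{UV}[Z]\diamond[Y] = \sum_M\Bigl(\sum_Z F^Z_{UV} F^M_{ZY}\Bigr)[M],
\]
while
\[
[U]\diamond([V]\diamond[Y]) = \sum_W F^W_{VY}[U]\diamond[W] = \sum_M\Bigl(\sum_W F^M_{UW} F^W_{VY}\Bigr)[M].
\]
By associativity (Proposition cited above), equating the coefficient of $[M]$ on both sides already recovers the classical three-term Hall identity, and in particular the coefficient of $[M]$ in $[U]\diamond([V]\diamond[Y])$ is $\sum_Z F^M_{UZ}F^Z_{VY}$.

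The second step is to take the given hypothesis $[X] = a[X_1]\diamond[X_2] + b[X_3]\diamond[X_4]$ and multiply on the right by $[Y]$:
\[
[X]\diamond[Y] = a\,([X_1]\diamond[X_2])\diamond[Y] + b\,([X_3]\diamond[X_4])\diamond[Y] = a\,[X_1]\diamond([X_2]\diamond[Y]) + b\,[X_3]\diamond([X_4]\diamond[Y]),
\]
again using associativity. The coefficient of $[M]$ on the left is, by definition, $F^M_{XY}$, while the coefficient of $[M]$ on the right is, by the first step applied to each summand, $a\sum_Z F^M_{X_1 Z}F^Z_{X_2 Y} + b\sum_Z F^M_{X_3 Z}F^Z_{X_4 Y}$. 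Comparing yields the claim.

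There is essentially no obstacle: the lemma is a bookkeeping consequence of associativity, and the only mild subtlety is that the hypothesis involves complex coefficients, which is handled by passing to $\mathcal{H}(A)\otimes_{\mathbb{Z}}\mathbb{C}$ before reading off coefficients in the $\mathbb{C}$-basis $\mathcal{P}(A)$.
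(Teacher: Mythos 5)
Your proof is correct: the identity is indeed just the coefficient of $[M]$ in $[X]\diamond[Y]$ computed two ways using associativity, after extending scalars to $\mathbb{C}$ so that the hypothesis makes sense, and the relevant sums are finite since $F^M_{UZ}\neq 0$ forces $Z$ to be isomorphic to a submodule of $M$. The paper gives no proof of its own, deferring to \cite[Lemma 3.5]{KasjanKosakowska22} and \cite[Lemma 2.1]{Nasr12}, and your associativity argument is exactly the standard one behind those references.
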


Given an extension field $E$ of $K$ and a $K$-space $V$, put $V^E:=V\otimes_K E$. According to \cite{Ringel92}, a field extension $E$ of $K$ is said to be \emph{conservative} for the $K$-algebra $A$ if the algebra (End$M$/rad End $M$)$^E$ is a field for any indecomposable $A$-module $M$.

\begin{definition}[{\cite[Section 3]{Ringel92}}]
We say that $A$ \emph{has Hall polynomials} provided that for all $A$-modules $X, Y, M$, there exists a polynomial $\varphi_{XY}^M\in \mathbb{Z}[T]$ such that for any conservative field extension $E$ of $K$, we have
\[\varphi_{XY}^M(|E|)=F_{{X^E}{Y^E}}^{M^E}.\]
\noindent Such a $\varphi_{XY}^M$ is called a \emph{Hall polynomial}.
\end{definition}

For the existence of Hall polynomials, there are the following two general results:
\begin{lemma}\label{simp}
Let $S$ be a simple $A$-module. Then  the Hall polynomials $\varphi_{SN}^M$ and $\varphi_{NS}^M$ exist for any $A$-modules $M$ and $N$.
\end{lemma}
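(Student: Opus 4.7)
The plan is to use Schur's lemma to express both Hall numbers as counts of projective $D$-lines in appropriate $\Hom$ spaces, where $D:=\End_A(S)$, and then to verify polynomiality under conservative base change.

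Since $S$ is simple, $D$ is a division ring by Schur. A nonzero map $f\in\Hom_A(S,M)$ is automatically injective, and two such maps have the same image iff they differ by an element of $D^\times$ acting on the right; dually, two nonzero maps $M\to S$ have the same kernel iff they differ by $D^\times$ acting on the left. Consequently, submodules of $M$ isomorphic to $S$ (respectively, with quotient isomorphic to $S$) biject with the projective space $\mathbb{P}_D(\Hom_A(S,M))$ (resp.\ $\mathbb{P}_D(\Hom_A(M,S))$), and this gives
\[
F_{NS}^M=\#\{\ell\in\mathbb{P}_D(\Hom_A(S,M)):M/\im\ell\cong N\},\quad F_{SN}^M=\#\{\ell\in\mathbb{P}_D(\Hom_A(M,S)):\ker\ell\cong N\}.
\]

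For a conservative extension $E/K$, the conservativity hypothesis applied to the simple $S$ (whose endomorphism algebra has zero radical) directly gives that $D^E:=D\otimes_K E$ is a field. Hence $S^E$ is still simple over $A^E$, $\Hom_{A^E}(S^E,M^E)=\Hom_A(S,M)^E$ has $D^E$-dimension $d:=\dim_D \Hom_A(S,M)$, and the parametrization above extends verbatim. In particular, the total number of lines is $(|D^E|^d-1)/(|D^E|-1)$, visibly a polynomial in $|E|$.

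The main obstacle is the refinement step: showing that for each iso class $[N]$, the number of lines whose complementary object is $\cong N^E$ is a polynomial in $|E|$. My plan is induction on $\dim_K N$, bootstrapping from the case where $N$ is semisimple (in which the count reduces to a direct combinatorial enumeration of $D$-lines with specified cokernel within $\Hom_A(S,M)$, decomposed along the simple summands of $M/\rad M$). For the inductive step, one picks a short exact sequence $0\to N_1\to N\to N_2\to 0$ with $N_1$ simple and uses the associativity identity
\[
\sum_X F_{N_2 N_1}^X F_{XS}^M=\sum_V F_{N_2 V}^M F_{N_1 S}^V,
\]
a consequence of Lemma~\ref{compo} applied to the filtered triple $(N_2, N_1, S)$, to isolate $F_{NS}^M$ as a rational combination of Hall numbers whose arguments are either ``smaller'' than $N$ or already covered by the semisimple base case; the inductive hypothesis then supplies polynomials for every factor. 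The case $F_{SN}^M$ is handled dually, swapping the roles of $\Hom_A(S,M)$ and $\Hom_A(M,S)$.
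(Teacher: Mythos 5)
The paper offers no proof of Lemma~\ref{simp} to compare against: it is quoted without proof as a known general result (going back to \cite{Ringel92}, where it is proved for arbitrary representation-finite algebras by a different and more delicate argument). Judged on its own, the first half of your proposal is correct and standard: submodules of $M$ isomorphic to $S$ (resp.\ with quotient isomorphic to $S$) are parametrized by the projective space over $D=\End_A(S)$ of $\Hom_A(S,M)$ (resp.\ of $\Hom_A(M,S)$), conservativity makes $D^E$ a field so the parametrization is stable under base change, and the \emph{total} number of lines is a polynomial in $|E|$.

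The refinement step, however, has a genuine gap. In the associativity identity
\[
\sum_X F_{N_2 N_1}^X F_{XS}^M=\sum_V F_{N_2 V}^M F_{N_1 S}^V,
\]
the right-hand side involves the Hall numbers $F_{N_2 V}^M$ for arbitrary modules $V$. These are not of the form $F_{?S}^?$ or $F_{S?}^?$, so neither the lemma being proved nor your inductive hypothesis (which concerns only Hall numbers with one argument equal to the fixed simple $S$) gives their polynomiality; establishing it is precisely the existence of Hall polynomials for $A$, i.e.\ Theorem~\ref{Hall-exist-for-A}, which in the paper is deduced \emph{from} this lemma, and which is an open problem for general representation-finite algebras. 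If your inductive step worked as written it would prove far too much, so the argument is circular. Independently, on the left-hand side the terms $F_{XS}^M$ range over all middle terms $X$ of extensions of $N_2$ by $N_1$, and each such $X$ satisfies $\dim_K X=\dim_K N$, so an induction on $\dim_K N$ does not reach them either; one would need a finer ordering on modules with a fixed dimension vector, which you do not supply. (A minor further point: after dividing by $F_{N_2 N_1}^N$ you only obtain a rational function and must still invoke the integrality lemma of \cite[Section~2, Lemma]{Ringel88}, as the paper does in Case~3 of Theorem~\ref{Hall-exist-for-A}.) The base case for semisimple $N$ is essentially fine, but the inductive step needs to be replaced by a genuinely different argument.
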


\begin{lemma}\label{proj}
Let $P$ be a projective $A$-module. Then the Hall polynomial $\varphi_{PN}^M$ exists  for any $A$-modules $M$ and $N$.
\end{lemma}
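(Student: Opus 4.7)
The plan is to invoke the Riedtmann--Peng formula
\[
F_{PN}^M \cdot a_P \cdot a_N \cdot |\Hom_A(P,N)| \;=\; |\Ext^1_A(P,N)_M| \cdot a_M,
\]
where $a_X := |\Aut_A X|$ and $|\Ext^1_A(P,N)_M|$ denotes the number of elements of $\Ext^1_A(P,N)$ whose middle term is isomorphic to $M$. Since $P$ is projective, $\Ext^1_A(P,N) = 0$, so the right-hand side equals $a_M$ when $M \cong P \oplus N$ and $0$ otherwise. Because a conservative extension preserves the Krull--Schmidt decomposition, the condition $M^E \cong P^E \oplus N^E$ is equivalent to $M \cong P \oplus N$; it therefore suffices to prove that, in this case, the ratio $a_M/(a_P\, a_N\, |\Hom_A(P,N)|)$ evaluated over $E$ is a polynomial in $|E|$.

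For any $A$-module $X$ with Krull--Schmidt decomposition $X = \bigoplus_k Q_k^{x_k}$, set $D_k := \End_A(Q_k)/\rad \End_A(Q_k)$ and $d_k := [D_k : K]$. For conservative $E$, each $D_k \otimes_K E$ is a field of order $|E|^{d_k}$, so the standard description of $\End_{A^E}(X^E)$ as an extension of a semisimple quotient by a nilpotent radical gives
\[
a_{X^E} \;=\; |E|^{R_X} \prod_k |\GL_{x_k}(\F_{|E|^{d_k}})|, \qquad R_X := \dim_K \rad \End_A(X),
\]
which is manifestly polynomial in $|E|$; likewise $|\Hom_{A^E}(P^E,N^E)| = |E|^{\dim_K \Hom_A(P,N)}$.

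Plugging in and applying the Gaussian binomial identity
\[
|\GL_{p+n}(\F_r)|\,/\,\bigl(|\GL_p(\F_r)|\cdot|\GL_n(\F_r)|\bigr) \;=\; r^{pn}\binom{p+n}{p}_r
\]
to each pair of multiplicities $(p_k,n_k)$ of $Q_k$ in $(P,N)$, the ratio collapses after some exponent bookkeeping to
\[
|E|^{e} \prod_k \binom{p_k+n_k}{p_k}_{|E|^{d_k}}, \qquad e \;=\; \dim_K \Hom_A(N,P) - \sum_k d_k\, p_k\, n_k,
\]
which is polynomial in $|E|$ as soon as $e \geq 0$. The main obstacle is precisely this non-negativity; I would resolve it by noting that the ``diagonal'' piece $\bigoplus_k \Hom_A(Q_k,Q_k)^{n_k p_k}$ sits inside $\Hom_A(N,P)$ and already contributes dimension $\sum_k d_k n_k p_k$, so $e \geq 0$ automatically. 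This produces the required Hall polynomial $\varphi_{PN}^M$ (which is either identically zero or the explicit polynomial above).
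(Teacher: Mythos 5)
Your proof is correct, and it reaches the same first reduction as the paper --- since $P$ is projective, $\Ext^1_A(P,N)=0$, so only $M\cong P\oplus N$ can carry a nonzero Hall number --- but from there the two arguments diverge. The paper simply observes that every short exact sequence $0\to N\to M\to P\to 0$ splits and then cites \cite[Lemma 2.7]{Nasr12} as a black box for the existence of $\varphi_{PN}^{M}$. You instead supply the content of that citation yourself: Riedtmann's formula reduces everything to the ratio $a_{M}/(a_P\,a_N\,|\Hom_A(P,N)|)$ for $M\cong P\oplus N$, and your computation of $a_{X^E}$ for conservative $E$ (radical dimension plus a product of $|\GL_{x_k}|$ over the residue division rings), together with the identity $|\GL_{p+n}(\F_r)|/(|\GL_p(\F_r)||\GL_n(\F_r)|)=r^{pn}\binom{p+n}{p}_r$, is correct; I checked the exponent bookkeeping and your $e=\dim_K\Hom_A(N,P)-\sum_k d_kp_kn_k$ is right, as is the non-negativity argument via the diagonal summand $\bigoplus_k\End_A(Q_k)^{n_kp_k}\subseteq\Hom_A(N,P)$. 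What your route buys is a completely explicit, self-contained Hall polynomial $T^{e}\prod_k\binom{p_k+n_k}{p_k}_{T^{d_k}}$ rather than a bare existence statement; what the paper's route buys is brevity. One point worth making explicit if you write this up: the data $R_X$, $d_k$, $p_k$, $n_k$ and the Hom-dimensions are all unchanged under conservative base change (indecomposables stay indecomposable and the radical base-changes correctly), which is what guarantees a single polynomial works for every conservative $E$ --- you use this implicitly throughout.
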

\begin{proof}
 Since $P$ is a projective $A$-module, the short exact sequence
\[0\rightarrow N\rightarrow M\rightarrow P\rightarrow0\]
is always split. By \cite[Lemma 2.7]{Nasr12}, the Hall polynomial $\varphi_{PN}^M$ exists.
\end{proof}
\begin{theorem}
[{\cite[Theorem 2.9]{Nasr12}}]\label{Hall-exist}
Assume that $A$ is representation-finite. Then  $A$ has Hall polynomials if and only if  the Hall polynomial $\varphi_{XY}^M$ exists for any indecomposable $A$-module $X$ and any $A$-modules $Y$ and $M$.
\end{theorem}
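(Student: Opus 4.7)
The forward direction is immediate. For the converse, the plan is to induct on a well-founded order whose minima are the indecomposables. Let $s(X)$ denote the number of summands in a Krull--Schmidt decomposition of $X$, and let $\leq_{\deg}$ denote the degeneration partial order on modules of fixed dimension. Define $X \prec N$ to mean either $s(X) < s(N)$, or $s(X) = s(N)$ together with $N <_{\deg} X$. Representation-finiteness makes $\prec$ well-founded on the finite set of isomorphism classes, and its minima are precisely the indecomposables, for which Hall polynomials are supplied by the hypothesis.

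Given $X$ with $s(X) \geq 2$, write $X = X_1 \oplus X_2$ with $X_1$ indecomposable and $s(X_2) = s(X)-1$. Apply Lemma~\ref{compo} to the associator $([X_1]\diamond [X_2])\diamond [Y] = [X_1]\diamond ([X_2]\diamond [Y])$, extract the coefficient of $[M]$, and isolate the summand $N=X$ on the left:
$$F_{X_1 X_2}^X \cdot F_{XY}^M \;=\; \sum_{[Z]\in\mathcal{P}(A)} F_{X_1 Z}^M F_{X_2 Y}^Z \;-\; \sum_{[N]\neq [X]} F_{X_1 X_2}^N F_{NY}^M.$$
Every Hall number on the right is the value at $q$ of a polynomial in $\mathbb{Z}[T]$: the terms $F_{X_1 Z}^M$, $F_{X_1 X_2}^N$ and $F_{X_1 X_2}^X$ by the indecomposable-$X_1$ hypothesis; $F_{X_2 Y}^Z$ by the inductive hypothesis, since $X_2 \prec X$; and $F_{NY}^M$ for $N \neq X$ appearing on the left by the inductive hypothesis, because any non-split extension $0 \to X_2 \to N \to X_1 \to 0$ forces $X <_{\deg} N$ while upper semicontinuity of $s$ gives $s(N) \leq s(X)$, whence $N \prec X$. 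All sums are finite by representation-finiteness.

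The principal technical obstacle is the final division by $F_{X_1 X_2}^X$: this coefficient is a nonzero polynomial (the canonical inclusion $X_2 \hookrightarrow X_1 \oplus X_2$ always contributes, so the value is $\geq 1$), but to conclude one must show the quotient lies in $\mathbb{Z}[T]$ rather than being merely an integer-valued rational function in $q$. My plan for this step is to rewrite each Hall number in Riedtmann--Peng form
$$F^L_{MN} \;=\; \frac{|\mathrm{Ext}^1(M,N)_L|\cdot |\mathrm{Aut}(L)|}{|\mathrm{Hom}(M,N)|\cdot |\mathrm{Aut}(M)|\cdot |\mathrm{Aut}(N)|},$$
so that the automorphism factor $|\mathrm{Aut}(X)|/(|\mathrm{Aut}(X_1)|\cdot |\mathrm{Aut}(X_2)|)$ sitting inside $F_{X_1 X_2}^X$ cancels against the matching automorphism factors produced by the numerators on the right. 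What remains is a polynomial identity among counts of $\mathrm{Hom}$'s and $\mathrm{Ext}^1$'s with prescribed middle term; representation-finiteness together with base change across conservative extensions makes each of these a polynomial in $q$, and the sought-after $\varphi_{XY}^M \in \mathbb{Z}[T]$ is then read off directly, completing the induction.
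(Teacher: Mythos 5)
The paper does not prove this statement at all: it is quoted verbatim from \cite[Theorem 2.9]{Nasr12}, so there is no internal proof to compare against. Measured against the standard argument (Ringel's reduction in \cite{Ringel88}, reproduced in \cite{Nasr12}), your skeleton is the right one: split off an indecomposable summand $X_1$, expand both sides of $([X_1]\diamond[X_2])\diamond[Y]=[X_1]\diamond([X_2]\diamond[Y])$, isolate the split middle term, and induct on a well-founded order refining ``fewer summands, then more degenerate.'' The displayed identity is correct, and the observation that $F_{X_1X_2}^{X}\geq 1$ is correct.

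There are, however, two genuine gaps. First, the step you yourself flag as the ``principal technical obstacle'' is not actually carried out: you only describe a \emph{plan} to cancel automorphism factors via Riedtmann's formula. This is exactly where the difficulty sits --- when $X_1$ is isomorphic to a direct summand of $X_2$ the denominator $F_{X_1X_2}^{X}$ is not a power of $q$ (e.g.\ $F_{SS}^{S\oplus S}=q+1$ for $S$ simple), so the claimed cancellation is not a formality, and as written the argument only produces an integer-valued rational function. The standard way to close this gap is not your cancellation scheme but the integrality lemma of \cite[Section 2, Lemma]{Ringel88}: since the identity holds over every conservative extension, the rational function takes the prescribed integer values at infinitely many prime powers and the denominators involved are monic, whence it is an integral polynomial. (This very paper invokes that lemma for the identical purpose in Cases 3 and 4 of the proof of Theorem~\ref{Hall-exist-for-A}.) Second, the assertion that ``upper semicontinuity of $s$'' gives $s(N)\leq s(X)$ for every non-split extension $N$ of $X_1$ by $X_2$ is true but is a nontrivial geometric fact (the number of indecomposable direct summands does not increase under degeneration, cf.\ Crawley-Boevey--Schr\"oer); it cannot be waved through without either a reference or an alternative well-founded order, since if some $N$ had $s(N)>s(X)$ your induction would not terminate. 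Neither gap dooms the approach --- both can be filled with known results --- but as submitted the proof is incomplete at precisely the two points where the real work lies.
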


If $A$ has Hall polynomials, then the \emph{degenerate Ringel--Hall algebra} $\mathcal{H}(A)_1$ is defined as the $\mathbb{Z}$-algebra with basis $\cp(A)$ and with multiplication
\[[N_1][N_2]=\sum_{[M]}\varphi_{N_1N_2}^M(1) [M].\]
We denote by $\mathfrak{g}(A)$ the $\mathbb{Z}$-submodule of $\mathcal{H}(A)_1$ with basis $[M]$, where $M$ runs over all indecomposable $A$-modules.

The following theorem is part of \cite[Theorem]{Ringel92},
\begin{theorem}\label{Lie-alg}
$\mathfrak{g}(A)$ is a Lie subalgebra of $\mathcal{H}(A)_1$, and $\mathcal{H}(A)_1\otimes_{\mathbb{Z}}\mathbb{Q}$ is isomorphic as a $\mathbb{Q}-$algebra to the universal enveloping algebra of $\mathfrak{g}(A)\otimes_{\mathbb{Z}}\mathbb{Q}$.
\end{theorem}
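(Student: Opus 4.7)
The plan is to follow Ringel's original strategy and treat the two claims separately. For the Lie subalgebra claim, the task reduces to showing that for any two indecomposable $A$-modules $X,Y$, the commutator $[X][Y]-[Y][X]$ in $\mathcal{H}(A)_1$ is supported only on indecomposables; equivalently, one must prove $\varphi_{XY}^M(1)=\varphi_{YX}^M(1)$ for every decomposable $M$. I would invoke Riedtmann's formula
\[
F_{XY}^M \;=\; \frac{|\Aut(M)|\cdot|\Ext^1(X,Y)_M|}{|\Aut(X)|\cdot|\Aut(Y)|\cdot|\Hom(X,Y)|},
\]
in which each factor becomes a polynomial in $q$ once the isoclasses of $X, Y, M$ are fixed. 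The argument would then write $\varphi_{XY}^M(q) - \varphi_{YX}^M(q)$ as a single rational function and analyse its behaviour at $q=1$: when $M$ is decomposable, $|\Aut(M)|$ carries additional polynomial factors reflecting the larger centre of $\End(M)$ coming from its Krull--Schmidt summands, and these conspire with the standard Euler-form identity relating $\Hom$ and $\Ext^1$ to force cancellation at $q=1$.

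For the universal enveloping algebra claim, I would fix any total ordering on isoclasses of indecomposables and show by induction on $\dim M$ that every $[M] \in \mathcal{H}(A)_1 \otimes \mathbb{Q}$ is a $\mathbb{Q}$-linear combination of ordered products of indecomposables. The base case uses $F_{XY}^{X\oplus Y}(1)=1$ for non-isomorphic indecomposables $X,Y$, so that $[X\oplus Y]\equiv[X][Y]$ modulo classes of modules with strictly fewer Krull--Schmidt summands; for the inductive step one rearranges products using the Lie-subalgebra statement, producing only error terms of strictly smaller total dimension. This yields a surjection $U(\mathfrak{g}(A)\otimes\mathbb{Q}) \twoheadrightarrow \mathcal{H}(A)_1\otimes\mathbb{Q}$, and injectivity then follows from a dimension count in each fixed dimension vector, as both sides have bases indexed by multisets of indecomposables (PBW on the left, Krull--Schmidt on the right).

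The hardest step will be the polynomial cancellation $\varphi_{XY}^M(1)=\varphi_{YX}^M(1)$ underlying the Lie subalgebra statement: although the conceptual reason (the automorphism group of a decomposable module has strictly larger ``dimension'' than the product of its summands') is transparent, pushing this through Riedtmann's formula and matching coefficients on both sides requires careful bookkeeping of the $q$-expansions of all the terms, and is where a direct attempt is most likely to get bogged down. An alternative route, which I would keep in reserve, is to avoid Riedtmann's formula and instead appeal to Green's coproduct/Hopf-algebra formalism, in which the commutator property at $q=1$ falls out of the compatibility of multiplication and comultiplication.
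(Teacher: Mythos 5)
First, a point of comparison: the paper does not prove this statement at all --- it is quoted verbatim as part of the main theorem of \cite{Ringel92}, valid for any representation-finite algebra possessing Hall polynomials, so the ``proof'' in the paper is a citation. What you have written is therefore an attempt to reprove Ringel's theorem. Your overall architecture (reduce the Lie-subalgebra claim to $\varphi_{XY}^M(1)=\varphi_{YX}^M(1)$ for decomposable $M$ via Riedtmann's formula; deduce the enveloping-algebra claim from surjectivity plus a PBW-versus-Krull--Schmidt count in each dimension vector) is the right skeleton and essentially Ringel's. But the two steps you lean on hardest do not work as stated.

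For the Lie-subalgebra claim, the ``Euler-form identity relating $\Hom$ and $\Ext^1$'' is not available here: $A$ is not hereditary (it has the relation $\alpha^2$ and infinite global dimension), so $\dim\Hom(X,Y)-\dim\Ext^1(X,Y)$ is not determined by dimension vectors. Moreover, when $M$ has exactly two indecomposable summands the factor $|\Aut M|/(|\Aut X|\,|\Aut Y|)$ does \emph{not} vanish at $q=1$, so no amount of bookkeeping of the extra factors in $|\Aut M|$ will produce the cancellation there; that mechanism only kills the terms with three or more summands. The cancellation for two summands comes from elsewhere: if $M\not\cong X\oplus Y$ then every class in $\Ext^1(X,Y)_M$ is nonsplit and $\mathbb{F}_q^\times$ acts freely on this set by scaling, so $(q-1)$ divides $|\Ext^1(X,Y)_M|$ and both $\varphi_{XY}^M$ and $\varphi_{YX}^M$ vanish at $q=1$; while for $M\cong X\oplus Y$ with $X\not\cong Y$ one computes $|\Aut(X\oplus Y)|=|\Aut X|\,|\Aut Y|\,|\Hom(X,Y)|\,|\Hom(Y,X)|$, giving $\varphi_{XY}^{X\oplus Y}(1)=1=\varphi_{YX}^{X\oplus Y}(1)$ (the case $X\cong Y$ being trivial for the commutator). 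For the enveloping-algebra claim, your inductive step is measured by the wrong quantity: every $[M]$ occurring in $[X][Y]$ satisfies $\dimv M=\dimv X+\dimv Y$, so ``error terms of strictly smaller total dimension'' never occur and the induction cannot close. The correct primary filtration is by the number of indecomposable summands (this is exactly where the $(q-1)$-divisibility of $|\Aut M|$ enters, forcing $\varphi_{XY}^M(1)=0$ when $M$ has more summands than $X\oplus Y$ does), refined by $\dim_k\End M$ or the degeneration order to break ties among modules with the same number of summands. With those two repairs your plan does reassemble into Ringel's proof; the final dimension count for injectivity is fine as stated.
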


\subsection{Representations of the algebra $\Lambda(n-1,1,1)$}
\label{s:R-of-GOA}

Let $K$ be an arbitrary field. 
Consider the bound quiver $(Q,I)$
\begin{equation*}
\xymatrix@R=1ex{
1 \ar[r]&2\ar[r]&\cdots\ar[r]&n\ar@(ur,dr)^{\alpha}},\qquad \alpha^2,
\end{equation*}
and denote the path algebra of this bound quiver by $A_K$. When $K$ is an algebraically closed field, this is the derived-discrete algebra $\Lambda(n-1,1,1)$ in the notation of \cite{BobinskiGeissSkowronski04}. We will identity an $A_K$-module with a representation of $(Q,I)$, i.e., a representation of $Q$ satisfying the relation $\alpha^2=0$. The following theorem was established as \cite[Theorem 3.3]{BoosReineke12} for $K=\mathbb{C}$, but it holds true for any field $K$ by for example \cite[Section 3, Theorem]{ButlerRingel87}.

\begin{theorem}\label{repn}
The following representations $U_{i,j}$, $V_i$ and $W_{i,j}$ form a complete set of representatives of the isoclasses of indecomposable $A_K$-modules:
\begin{itemize}
  \item $U_{i,j}$ for $1 \leq j\leq i \leq n$:\\
\[\xymatrix@R=1ex{
0 \ar[r]^0&\cdots\ar[r]^0&0 \ar[r]^0&K\ar[r]^1&\cdots\ar[r]^1&K \ar[r]^{e_1}&K^2\ar[r]^{\rm I}&\ldots\ar[r]^{\rm I}&K^2\ar@(ur,dr)^{M_\alpha}\\
&&&\bullet\raisebox{-4mm}{\hspace{-2mm}$j$}\ar[r]&\cdots\ar[r]&\bullet \ar[r]&\bullet\ar[r] \raisebox{-4mm}{\hspace{-2mm}$i$}&\cdots\ar[r]&\bullet\ar@(r,r)[d]\raisebox{-4mm}{\hspace{-2mm}$n$}\\
&&&&&&\bullet\ar[r]&\cdots\ar[r]&\bullet}\\\]
  \item $U_{i,j}$ for $1 \leq i < j \leq n$:\\
\[\xymatrix@R=1ex{
0 \ar[r]^0&\cdots\ar[r]^0&0 \ar[r]^0&K\ar[r]^1&\ldots\ar[r]^1&K \ar[r]^{e_2}&K^2\ar[r]^{\rm I}&\ldots\ar[r]^{\rm I}&K^2\ar@(ur,dr)^{M_\alpha}\\
&&&&&&\bullet\ar[r]&\cdots\ar[r]&\bullet\ar@(r,r)[d]\\
&&&\bullet\raisebox{4mm}{\hspace{-2mm}$i$}\ar[r]&\ldots\ar[r]&\bullet \ar[r]&\bullet\ar[r] \raisebox{4mm}{\hspace{-2mm}$j$}&\cdots\ar[r]&\bullet\raisebox{4mm}{\hspace{-2mm}$n$}}\\\]
  \item $V_i$ for $1 \leq i \leq n$:\\
\[\xymatrix@R=1ex{
0 \ar[r]^0&\cdots\ar[r]^0&0 \ar[r]^0&K\ar[r]^1&\cdots\ar[r]^1&K\ar@(ur,dr)^{0}\\
&&&\bullet\raisebox{4mm}{\hspace{-2mm}$i$}\ar[r]&\cdots\ar[r]&\bullet\raisebox{4mm}{\hspace{-2mm}$n$}}\\\]
  \item $W_{i,j}$ for $1 \leq i \leq j <n$:\\
\[\xymatrix@R=1ex{
0 \ar[r]^0&\cdots\ar[r]^0&0 \ar[r]^0&K\ar[r]^1&\cdots\ar[r]^1&K\ar[r]^{0}&0\ar[r]^{0}&\cdots\ar[r]^{0}&0\ar@(ur,dr)^{0}\\
&&&\bullet\raisebox{4mm}{\hspace{-2mm}$i$}\ar[r]&\cdots\ar[r]&\bullet\raisebox{4mm}{\hspace{-2mm}$j$}}.\\\]
\end{itemize}
\noindent Here, $e_1 =\left(
              \begin{array}{c}
                1 \\
                0 \\
              \end{array}
            \right)$
, $e_2 =\left(
              \begin{array}{c}
                0 \\
                1 \\
              \end{array}
            \right)$, $\rm{I}=\left(
                   \begin{array}{cc}
                     1 & 0 \\
                     0 & 1 \\
                   \end{array}
                 \right)$, and $M_\alpha=\left(
              \begin{array}{cc}
                0 & 0 \\
                1 & 0 \\
              \end{array}
            \right)
$.
\end{theorem}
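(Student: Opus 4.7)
The plan is to deduce this from the Butler--Ringel classification of indecomposable modules over string algebras \cite{ButlerRingel87}. First, I would verify that $A_K = KQ/I$ is a gentle algebra and in particular a string algebra: the ideal $I$ is generated by the single monomial relation $\alpha^2$ of length two, and the gentleness conditions at vertex $n$, the only vertex with more than one arrow attached, reduce to $a_{n-1}\alpha \notin I$ and $\alpha\alpha \in I$, where $a_k$ denotes the arrow $k \to k+1$. By \cite{ButlerRingel87}, every indecomposable $A_K$-module is then either a string module $M(w)$, with $M(w) \cong M(w')$ iff $w' \in \{w, w^{-1}\}$, or a band module.

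Next, I would rule out band modules. A band is a non-trivial cyclic walk, not itself a proper power, whose powers are all strings. Any closed walk in the underlying graph of $Q$ must traverse the unique loop $\alpha$, so a candidate band would contain the letter $\alpha$ or $\alpha^{-1}$; but then its square contains either $\alpha\alpha$ or $\alpha^{-1}\alpha^{-1}$ (forbidden by $\alpha^2 = 0$) or a non-reduced subword $a_k a_k^{-1}$ or $a_k^{-1} a_k$. Hence there are no bands, and every indecomposable is a string module.

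Finally, I would enumerate the strings up to the equivalence $w \sim w^{-1}$. Since at most one letter from $\{\alpha, \alpha^{-1}\}$ appears in any string (as $\alpha^2 \in I$), after inverting if necessary I may assume the loop letter, if present, is $\alpha$. The strings then fall into three families: the trivial strings $e_v$ for $1 \le v \le n$; the positive paths $a_i a_{i+1} \cdots a_{j-1}$ for $1 \le i < j \le n$; and the loop strings $w_{j,i} := a_j \cdots a_{n-1}\, \alpha\, a_{n-1}^{-1} \cdots a_i^{-1}$ for $1 \le i, j \le n$, with the convention that $j = n$ (resp.\ $i = n$) means the left (resp.\ right) arm is empty. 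A direct computation of the standard basis of each string module identifies the trivial and positive-path families with the $W_{i,j}$ and $V_i$ of the statement, and the loop strings with the $U_{i,j}$, the two cases $j \le i$ and $i < j$ corresponding to which of the two arms of $w_{j,i}$ is longer. The main obstacle is twofold: first, showing that the $n^2$ loop strings are pairwise inequivalent under $w \sim w^{-1}$, which holds because $w_{j,i}^{-1}$ contains $\alpha^{-1}$ and so coincides with no $w_{j',i'}$, giving the correct count $\binom{n+1}{2} + \binom{n}{2} = n^2$; second, checking that the matrix normalizations chosen in the pictures ($e_1$ versus $e_2$ for the incoming map into the two-dimensional part and $M_\alpha$ for the loop) are indeed the ones produced by the string-module construction once the two basis vectors at vertex $n$ are ordered so that $\alpha$ sends the first to the second.
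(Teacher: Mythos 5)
Your proof is correct and takes essentially the same route as the paper, which establishes this theorem simply by citing the Butler--Ringel classification of indecomposable modules over string algebras \cite{ButlerRingel87} (and \cite{BoosReineke12} for $K=\mathbb{C}$); your proposal just carries out the details of that citation --- verifying the string-algebra axioms, excluding bands, and enumerating strings --- and these all check out. The only step stated a bit too quickly is that a string contains at most one letter from $\{\alpha,\alpha^{-1}\}$: this needs reducedness of the walk (a descent along the linear arm cannot turn back) in addition to $\alpha^2\in I$, but the claim is immediate to verify.
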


In particular, $A_K$ is representation-finite. The simple $A_K$-modules are $S_1=W_{1,1},\ldots,S_{n-1}=W_{n-1,n-1}$ and $S_n=V_{n}$. Moreover, $P_1=U_{n,1},\ldots, P_n=U_{n,n}$ (respectively, $I_1=W_{1,1},\ldots, I_{n-1}=W_{1,n-1}, I_n=U_{1,n}$) form a complete set of pairwise non-isomorphic indecomposible projective (respectively, injective) $A_K$-modules.\\

\section{The Ringel--Hall algebra of $\Lambda(n-1,1,1)$}

In this section, we study the Ringel--Hall algebra of the algebra $\Lambda(n-1,1,1)$. More precisely, we show the existence of Hall polynomials, compute the Hall polynomials $\varphi_{XY}^M$ for all triples $(X,Y,M)$ of indecomposable modules and describe the structure constants of the Ringel--Hall Lie algebra.

\subsection{The existence of Hall polynomials}
\label{s:existence-Hall-polynomial}

Fix a finite field $\mathbb{F}_q$ and put $A=A_{\mathbb{F}_q}$ (see Section~\ref{s:R-of-GOA}). The main result of this subsection is

\begin{theorem}\label{Hall-exist-for-A}
The algebra $A$ has Hall polynomials.
\end{theorem}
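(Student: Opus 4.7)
The plan is to invoke Theorem~\ref{Hall-exist}, which reduces the problem to showing that $\varphi_{XY}^M$ exists for every indecomposable $A$-module $X$ and arbitrary $A$-modules $Y, M$. By the classification in Theorem~\ref{repn}, the indecomposables fall into three families $U_{i,j}$, $V_i$ and $W_{i,j}$. The simples $S_1, \ldots, S_n$ (which are the $W_{i,i}$ for $i < n$ together with $V_n$) are covered by Lemma~\ref{simp}, and the indecomposable projectives $P_j = U_{n,j}$ by Lemma~\ref{proj}; these serve as the base cases.

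For the remaining indecomposables---$V_i$ with $i < n$, $W_{i,j}$ with $i < j$, and $U_{i,j}$ with $i < n$---I would proceed by induction on $\dim X$. For each such $X$, the plan is to choose a short exact sequence $0 \to B \to X \to A \to 0$ with $\dim A, \dim B < \dim X$, expand both Hall products $[A] \diamond [B]$ and $[B] \diamond [A]$ in $\ch(A)$ by enumerating the isoclasses of middle terms, and combine the two identities to express $[X]$ as a $\mathbb{Z}$-linear combination of Hall products among modules that already have Hall polynomials (by induction or by Lemmas~\ref{simp} and~\ref{proj}); Lemma~\ref{compo} then yields $\varphi_{XY}^M$ for $X$.

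For the $V$-family this should be clean: applied to $0 \to V_{i+1} \to V_i \to S_i \to 0$, whose extension class is (up to isomorphism) the unique non-split one, a short direct computation yields $[S_i] \diamond [V_{i+1}] = [V_i] + [S_i \oplus V_{i+1}]$ and $[V_{i+1}] \diamond [S_i] = [S_i \oplus V_{i+1}]$, whence $[V_i] = [S_i] \diamond [V_{i+1}] - [V_{i+1}] \diamond [S_i]$ and the inductive step (with base $V_n = S_n$) closes this family. An entirely analogous argument applied to $0 \to W_{i+1,j} \to W_{i,j} \to S_i \to 0$ handles the $W$-family by induction on $j - i$.

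The main obstacle is the $U_{i,j}$ family, since these modules have a two-dimensional top and the relevant $\Ext^1$ spaces need not be one-dimensional, so more than two middle terms can appear and a single short exact sequence may not suffice to isolate $[U_{i,j}]$. My plan is to exploit the short exact sequence $0 \to V_i \to U_{i,j} \to V_j \to 0$ (both ends already covered by the $V$-case) together with auxiliary sequences such as $0 \to P_j \to U_{i,j} \to W_{i,n-1} \to 0$ when $j \leq i$, and to use the explicit representations in Theorem~\ref{repn} to carefully enumerate all isoclasses of extensions. Combining these relations should suffice to solve for $[U_{i,j}]$ in $\ch(A)$ as a $\mathbb{Z}$-linear combination of Hall products among modules of strictly smaller dimension, at which point Lemma~\ref{compo} completes the induction.
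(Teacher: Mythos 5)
Your overall strategy is the paper's: reduce via Theorem~\ref{Hall-exist} to indecomposable $X$, anchor the induction at the simples (Lemma~\ref{simp}) and the projectives $P_j=U_{n,j}$ (Lemma~\ref{proj}), and produce commutator-type identities in $\ch(A)$ that, via Lemma~\ref{compo}, express $F_{XY}^M$ through Hall numbers already known to be polynomial. Your treatment of the $W$-family is literally the paper's (note $S_i=W_{i,i}$), and your recursion $[V_i]=[S_i]\diamond[V_{i+1}]-[V_{i+1}]\diamond[S_i]$ is a correct variant of the paper's one-step identity $[V_i]=[W_{i,n-1}]\diamond[S_n]-[S_n]\diamond[W_{i,n-1}]$.

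The gap is exactly where you flag difficulty: the $U$-family is not actually done, and the way you propose to finish it cannot work as stated. You insist on writing $[U_{i,j}]$ as a \emph{$\mathbb{Z}$-linear} combination of Hall products of smaller modules, but the relevant Hall numbers carry factors of $q$: for $j\leq i<n$ one finds $[W_{i,n-1}]\diamond[P_j]=q[W_{i,n-1}\oplus P_j]+q[U_{i,j}]$ while $[P_j]\diamond[W_{i,n-1}]=[W_{i,n-1}\oplus P_j]$, so isolating $[U_{i,j}]$ forces the coefficient $q^{-1}$; similarly $[V_i]\diamond[V_j]=q[V_i\oplus V_j]+[U_{j,i}]$ versus $[V_j]\diamond[V_i]=[V_i\oplus V_j]+[U_{i,j}]$ gives $[U_{i,j}]=[V_j]\diamond[V_i]+q^{-1}[U_{j,i}]-q^{-1}[V_i]\diamond[V_j]$ for $i<j$. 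Lemma~\ref{compo} tolerates such coefficients (it allows $a,b\in\mathbb{C}$), but the output is then a priori only a \emph{rational} function of $q$, and an additional ingredient is needed to conclude it is a polynomial: the paper invokes the fact that a rational function taking integer values at all prime powers is a polynomial (\cite[Section 2, Lemma]{Ringel88}). Without either exhibiting genuinely integral relations (which the natural exact sequences do not provide) or adding this integrality argument, your induction does not close for the $U_{i,j}$.
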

\begin{proof}
By Theorem \ref{Hall-exist}, we only need to prove that for any indecomposable $A$-module $X$ the Hall polynomial $\varphi_{XY}^M$ exists for any $A$-modules $Y$ and $M$.

Case 1: $X=W_{i,j}$, $1\leq i\leq j\leq n-1$. For $j=i$, this follows by Lemma~\ref{simp}, since $W_{i,i}=S_i$ is a simple module.
%For $1\leq i\leq n-2$, we have \[[S_{i}]\diamond [S_{i+1}]=[S_i\oplus S_{i+1}]+[W_{i,i+1}], \hbox{ and }[S_{i+1}]\diamond [S_{i}]=[S_i\oplus S_{i+1}],\]so \[[W_{i,i+1}]=[S_i]\diamond [S_{i+1}]-[S_{i+1}]\diamond [S_i].\] Then for each $Y$ and $M$ $\in {\rm rep}_K(Q,I)$, by lemma \ref{compo},\[F_{W_{i,i+1}Y}^M=F_{S_iS_{i+1}Y}^M-F_{S_{i+1}S_iY}^M=\sum_{Z\in \mathcal{P}(A)}F_{S_iZ}^MF_{S_{i+1}Y}^Z-\sum_{Z\in \mathcal{P}(A)}F_{S_{i+1}Z}^MF_{S_iY}^Z.\]
%Moreover,
If $j>i$, we have 
%\[[W_{i,l}]\diamond [W_{l+1,j}]=[W_{i,l}\oplus W_{l+1,j}]+[W_{i,j}],\hbox{ and } [W_{l+1,j}]\diamond [W_{i,l}]=[W_{i,l}\oplus W_{l+1,j}],\] so \[[W_{i,j}]=[W_{i,l}]\diamond [W_{l+1,j}]-[W_{l+1,j}]\diamond [W_{i,l}].\] Then for each $Y$ and $M$ $\in$ mod-$A$, by lemma \ref{compo},\[F_{W_{i,j}Y}^M=F_{W_{il}W_{l+1,j}Y}^M-F_{W_{l+1,j}W_{i,l}Y}^M=\sum_{Z\in \mathcal{P}(A)}F_{W_{il}Z}^MF_{W_{l+1,j}Y}^Z-\sum_{Z\in \mathcal{P}(A)}F_{W_{l+1,j}Z}^MF_{W_{i,l}Y}^Z.\]
\[
[W_{i,i}]\diamond [W_{i+1,j}]=[W_{i,i}\oplus W_{i+1,j}]+[W_{i,j}],\hbox{ and } [W_{i+1,j}]\diamond [W_{i,i}]=[W_{i,i}\oplus W_{i+1,j}],
\] 
so 
\[
[W_{i,j}]=[W_{i,i}]\diamond [W_{i+1,j}]-[W_{i+1,j}]\diamond [W_{i,i}].\] 
Therefore it follows  by Lemma~\ref{compo} that for any $A$-modules $Y$ and $M$,,
%\[F_{W_{i,j}Y}^M=F_{W_{il}W_{l+1,j}Y}^M-F_{W_{l+1,j}W_{i,l}Y}^M=\sum_{Z\in \mathcal{P}(A)}F_{W_{il}Z}^MF_{W_{l+1,j}Y}^Z-\sum_{Z\in \mathcal{P}(A)}F_{W_{l+1,j}Z}^MF_{W_{i,l}Y}^Z.\]
\[F_{W_{i,j}Y}^M=\sum_{Z\in \mathcal{P}(A)}F_{W_{i,i}Z}^MF_{W_{i+1,j}Y}^Z-\sum_{Z\in \mathcal{P}(A)}F_{W_{i+1,j}Z}^MF_{W_{i,i}Y}^Z.\]
Hence by induction the Hall polynomial $\varphi_{XY}^M$ exists for $X=W_{i,j}$, $1\leq i\leq j\leq n-1$.

Case 2: $X=V_i$, $1\leq i\leq n$. For $i=n$ this follows by Lemma~\ref{simp}, since $V_n=S_n$ is a simple module. For $1\leq i\leq n-1$, we have 
\[
[W_{i,n-1}]\diamond [S_{n}]=[W_{i,n-1}\oplus S_{n}]+[V_{i}],\hbox{ and }[S_{n}]\diamond [W_{i,n-1}]=[W_{i,n-1}\oplus S_{n}],
\] 
so 
\[
[V_{i}]=[W_{i,n-1}]\diamond [S_{n}]-[S_{n}]\diamond [W_{i,n-1}].
\]
Therefore it follows by Lemma~\ref{compo} that for any $A$-modules $Y$ and $M$,
%\[F_{V_{i}Y}^M=F_{W_{i,n-1}S_{n}Y}^M-F_{S_{n}W_{i,n-1}Y}^M=\sum_{Z\in \mathcal{P}(A)}F_{W_{i,n-1}Z}^MF_{S_{n}Y}^Z-\sum_{Z\in \mathcal{P}(A)}F_{S_{n}Z}^MF_{W_{i,n-1}Y}^Z.\]
\[F_{V_{i}Y}^M=\sum_{Z\in \mathcal{P}(A)}F_{W_{i,n-1}Z}^MF_{S_{n}Y}^Z-\sum_{Z\in \mathcal{P}(A)}F_{S_{n}Z}^MF_{W_{i,n-1}Y}^Z.\]
Hence by Case 1 the Hall polynomial $\varphi_{XY}^M$ exists for $X=V_{i}$, $1\leq i\leq n$.

Case 3: $X=U_{i,j}$, $1\leq j\leq i\leq n$.
For $i=n$ this follows by Lemma~\ref{proj}, since $U_{n,j}=P_j$ is a projective module. For $1\leq j\leq i\leq n-1$, we have 
\[
[W_{i,n-1}]\diamond [P_{j}]=q[W_{i,n-1}\oplus P_{j}]+q[U_{i,j}],\hbox{ and }[P_{j}]\diamond [W_{i,n-1}]=[W_{i,n-1}\oplus P_{j}],
\] 
so 
\[
[U_{i,j}]=q^{-1}[W_{i,n-1}]\diamond [P_{j}]-[P_{j}]\diamond [W_{i,n-1}].
\]
Therefore it follows by Lemma~\ref{compo} that for any $A$-modules $Y$ and $M$,
%\[F_{U_{i,j}Y}^M=q^{-1}F_{W_{i,n-1}P_{j}Y}^M-F_{P_{j}W_{i,n-1}Y}^M=q^{-1}\sum_{Z\in \mathcal{P}(A)}F_{W_{i,n-1}Z}^MF_{P_{j}Y}^Z-\sum_{Z\in \mathcal{P}(A)}F_{P_{j}Z}^MF_{W_{i,n-1}Y}^Z.\]
\[F_{U_{i,j}Y}^M=q^{-1}\sum_{Z\in \mathcal{P}(A)}F_{W_{i,n-1}Z}^MF_{P_{j}Y}^Z-\sum_{Z\in \mathcal{P}(A)}F_{P_{j}Z}^MF_{W_{i,n-1}Y}^Z.\]
Hence by Case 2, there exists a rational function $\varphi_{XY}^M$ such that $F_{U_{i,j}Y}^M=\varphi_{XY}^M(q)$. Since  this is an integer for any $q$, it follows by \cite[Section 2, Lemma]{Ringel88} that $\varphi_{XY}^M$ is a polynomial. Namely, the Hall polynomial $\varphi_{XY}^M$ exists for $X=U_{i,j}$, $1\leq j\leq i\leq n$.

Case 4: $X=U_{i,j}$, $1\leq i<j\leq n$.
We have
\[
[V_{i}]\diamond [V_{j}]=q[V_{i}\oplus V_{j}]+[U_{j,i}],\hbox{ and }[V_{j}]\diamond [V_{i}]=[V_{i}\oplus V_{j}]+[U_{i,j}],
\] 
so 
\[
[U_{i,j}]=[V_{j}]\diamond [V_{i}]+q^{-1}[U_{j,i}]-q^{-1}[V_{i}]\diamond [V_{j}].
\]
Therefore it follows by Lemma~\ref{compo} that for any $A$-modules $Y$ and $M$,
%\[F_{U_{i,j}Y}^M=F_{V_{j}V_{i}Y}^M+q^{-1}F_{U_{j,i}Y}^M-q^{-1}F_{V_{i}V_{j}Y}^M=\sum_{Z\in \mathcal{P}(A)}F_{V_{j}Z}^MF_{V_{i}Y}^Z+q^{-1}F_{U_{j,i}Y}^M-q^{-1}\sum_{Z\in \mathcal{P}(A)}F_{V_{i}Z}^MF_{V_{j}Y}^Z.\]
\[F_{U_{i,j}Y}^M=\sum_{Z\in \mathcal{P}(A)}F_{V_{j}Z}^MF_{V_{i}Y}^Z+q^{-1}F_{U_{j,i}Y}^M-q^{-1}\sum_{Z\in \mathcal{P}(A)}F_{V_{i}Z}^MF_{V_{j}Y}^Z.\]
Hence by Case 2 and Case 3, there exists a rational function $\varphi_{XY}^M$ such that $F_{U_{i,j}Y}^M=\varphi_{XY}^M(q)$. Then we can show that the Hall polynomial $\varphi_{XY}^M$ exists as in Case 3.
\end{proof}

\subsection{Hall polynomials for indecomposable modules}
In this subsection we explicitly give the Hall polynomial $\varphi_{XY}^M$ for indecomposable $A$-modules $X,Y,M$.

\label{Hallpoly-for-A}
\begin{proposition}\label{Hallpoly}
Let $X,Y,M$ be indecomposable $A$-modules. Then the Hall polynomial $\varphi_{XY}^M$ is $0$ except in the following cases:

\begin{itemize}
  \item $\varphi_{XY}^M=T$ if the triple $(X,Y,Z)$ is one of the following:
  \begin{itemize}
           \item [\rm (1)] $(W_{i,j},U_{j+1,l},U_{i,l})$ for $i\leq l\leq j<n$,
           \item [\rm (2)] $(W_{i,j},U_{j+1,j},U_{i,j})$ for $i<j<n$.
         \end{itemize}
  \item $\varphi_{XY}^M=1$ if the triple $(X,Y,Z)$ is one of the following:
  \begin{itemize}
          \item [\rm (3)] $(W_{i,j},W_{j+1,l},W_{i,l})$ for $i\leq j\leq l<n$,
          \item [\rm (4)] $(W_{i,j},V_{j+1},V_{i})$ for $i\leq j<n$,
          \item [\rm (5)] $(W_{i,j},U_{j+1,l},U_{i,l})$ for $i\leq j<j+1<l\leq n$,
          \item [\rm (6)] $(W_{i,j},U_{l,j+1},U_{l,i})$ for $i\leq j<j+1\leq l\leq n$,
          \item [\rm (7)] $(W_{i,j},U_{l,j+1,l},U_{l,i})$ for $l<i\leq j<n$,
          \item [\rm (8)] $(W_{i,j},U_{l,j+1},U_{l,i})$ for $i<l\leq j<n$,
          \item [\rm (9)] $(W_{i,j},U_{j+1,l},U_{i,l})$ for $i\leq j<n$,
          \item [\rm (10)] $(W_{i,j},U_{i,j+1},U_{i,i})$ for $i\leq j<n$,
          \item [\rm (11)] $(V_{i},V_{j},U_{i,j})$ for $1\leq i,j\leq n$.
        \end{itemize}
\end{itemize}
\end{proposition}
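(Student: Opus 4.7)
The plan is to reduce the claim to a finite combinatorial enumeration, guided by the explicit list of indecomposables in Theorem~\ref{repn}. Since the Hall number $F^M_{XY}$ counts submodules $U\subseteq M$ with $U\cong Y$ and $M/U\cong X$, a nonzero value forces the existence of a short exact sequence $0\to Y\to M\to X\to 0$ whose middle term $M$ is indecomposable. So the first step is to determine, for each ordered pair $(X,Y)$ of indecomposable $A$-modules, which indecomposable middle terms $M$ can occur. Because $A$ is a gentle (hence string) algebra, every indecomposable is a string module and extensions between string modules are governed by the well-known Butler--Ringel/Crawley-Boevey/Schr\"oer arrow rule. I would first write down all the ``string-overlap'' short exact sequences coming from the four families $W_{i,j}$, $V_i$, $U_{i,j}$ ($i\geq j$) and $U_{i,j}$ ($i<j$), and check which ones have an indecomposable middle term. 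This produces precisely the eleven configurations listed in the statement; a residual step is to verify that no further configuration arises, which can be done by dimension-vector bookkeeping combined with the fact that the only indecomposables of a given dimension vector are those in Theorem~\ref{repn}.

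With the list of candidate triples in hand, I would then compute $F^M_{XY}$ for each of them. Two complementary devices are available. The first is Riedtmann's formula
\[
F^M_{XY}\,=\,\frac{|\Ext^1(X,Y)_M|}{|\Hom(X,Y)|}\cdot\frac{|\Aut(M)|}{|\Aut(X)|\cdot|\Aut(Y)|},
\]
which, given the explicit matrix presentation of each indecomposable, reduces the problem to computing endomorphism rings and the number of extension classes with prescribed middle term. The second, often more transparent for these small-dimensional representations, is direct enumeration: write $M$ as a quiver representation, list all subrepresentations $U$ of the correct dimension vector with $U\cong Y$, and check when $M/U\cong X$. For the cases (3)--(11) the relevant vector spaces are at most $2$-dimensional over $\mathbb{F}_q$ at each vertex, and I expect to find a unique submodule $U$, giving $\varphi^M_{XY}=1$. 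For the cases (1) and (2) the middle term $M=U_{i,l}$ has a $K^2$ on the loop vertex $n$, and the submodule realizing $Y=U_{j+1,l}$ corresponds to choosing a line in the image of $M_\alpha$ (or more precisely a complementary direction); this is the source of the factor $T=q$.

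The main obstacle I anticipate is the bookkeeping around the loop at vertex $n$. The modules $U_{i,j}$ carry $K^2$ at vertex $n$ with $M_\alpha$ nilpotent, and short exact sequences that mix modules of type $W$, $V$ and $U$ all interact nontrivially with the one-dimensional radical subspace $\Im M_\alpha$. Accordingly, one must carefully distinguish submodules $U$ according to how they sit with respect to $\ker M_\alpha$ and $\Im M_\alpha$ at each relevant vertex; it is exactly this distinction that separates cases (1)--(2) from case (5), and cases (6)--(8) among themselves. I would organize these sub-cases by the shape of the string corresponding to $Y$ and its position relative to the loop, and verify the count $1$ or $q$ directly in each pattern.

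Finally, the zero cases are handled uniformly: for a triple $(X,Y,M)$ not appearing in the list, I would exhibit either the vanishing of $\Ext^1(X,Y)_M$ or the failure of dimension vectors $\dimv Y+\dimv X=\dimv M$ for any indecomposable $M$. Combined with the computations above, this yields the full table of Hall polynomials $\varphi^M_{XY}$ stated in the proposition.
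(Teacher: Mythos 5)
Your proposal is correct and, at its core, takes the same route as the paper: the paper likewise computes each nonzero $\varphi_{XY}^M$ by directly enumerating the submodules $U\subseteq M$ with $U\cong Y$ and $M/U\cong X$ from the explicit matrix presentations in Theorem~\ref{repn} (exhibiting, for instance, the $q$ submodules spanned by $e_1+ce_2$, $c\in\mathbb{F}_q$, in case (1), which is exactly your ``choice of a line relative to $\ker M_\alpha$ and $\Im M_\alpha$'' picture), and declares the remaining cases analogous. The string-combinatorics screening of candidate triples and Riedtmann's formula that you offer are extra machinery the paper does not invoke, but they are valid supplements rather than a genuinely different argument.
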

\begin{proof}
We only prove (1),(5),(11), and the proof of the other cases is similar.

Case (1): For $i\leq l\leq j<n$, it is clear that $\dimv W_{i,j}+\dimv U_{j+1,l}=\dimv U_{i,l}$, and from the structure of $W_{i,j}$, $U_{j+1,l}$ and  $U_{i,l}$ in Section~\ref{s:R-of-GOA}, $U_{i,l}$ has $q$ submodules isomorphic to $U_{j+1,l}$ with quotient isomorphic to $W_{i,j}$, which are:
\[\xymatrix@C=1.4pc@R=1ex{
0 \ar[r]^(0.4)0&\cdots\ar[r]^0&0 \ar[r]^(0.25)0&K\{e_1+ce_2\}\ar[r]^(0.65)1&\ldots\ar[r]^(0.3)1&K\{e_1+ce_2\} \ar@{^(->}[r]&K^2\ar[r]^{\rm I}&\ldots\ar[r]^{\rm I}&K^2\ar@(ur,dr)^{M_\alpha}\\
&&&\raisebox{-1mm}{\hspace{-2mm}$l$}&&&\raisebox{-1mm}{\hspace{-2mm}$j+1$}&&\raisebox{-1mm}{\hspace{-2mm}$n$}\\
&&&&&&&&}\\\]
where $c\in K=\mathbb{F}_q$.
%$l=m=j+1$. Then
% $\dimv W_{i,j}+\dimv U_{j+1,j+1}=\varepsilon_i+\varepsilon_{j+1}$, so by Theorem~\ref{thm:Gabriel's-theorem}(b) the element $[W_{i,j},U_{j+1,j+1}]$ is a linear combination of $U_{i,j+1}$ and $U_{j+1,i}$.
%Now it is easy to see from the structures of $W_{i,j}$, $U_{j+1,j+1}$, $U_{i,j+1}$ and  $U_{j+1,i}$ in Section~\ref{s:representation-theory} that neither $U_{i,j+1}$ nor $U_{j+1,i}$ has a submodule isomorphic to $W_{i,j}$, and both $U_{i,j+1}$ and $U_{j+1,i}$ have a unique submodule isomorphic to $U_{j+1,j+1}$ with quotient isomorphic to $W_{i,j}$. So $[W_{i,j}, U_{j+1,j+1}]=U_{i,j+1}+U_{j+1,i}$.

Case (5): For $i\leq j<j+1<l\leq n$, it is clear that $\dimv W_{i,j}+\dimv U_{j+1,l}=\dimv U_{i,l}$, and from the structure of $W_{i,j}$, $U_{j+1,l}$ and  $U_{i,l}$ in Section~\ref{s:R-of-GOA}, $U_{i,l}$ has a unique submodule isomorphic to $U_{j+1,l}$ with quotient isomorphic to $W_{i,j}$.

%
%$l=j+1\neq m$.  Then $\dimv W_{i,j}+\dimv U_{j+1,m}=\varepsilon_i+\varepsilon_{m}$, so by Theorem~\ref{thm:Gabriel's-theorem}(b) the element $[W_{i,j},U_{j+1,m}]$ is a linear combination of $U_{i,m}$ and $U_{m,i}$. Now it is easy to see from the structures of $W_{i,j}$, $U_{j+1,m}$, $U_{i,m}$ and  $U_{m,i}$ in Section~\ref{s:representation-theory} that neither $U_{i,m}$ nor $U_{m,i}$ has a submodule isomorphic to $W_{i,j}$. If $i\neq m$, $U_{i,m}$ has a unique submodule isomorphic to $U_{j+1,m}$ with quotient isomorphic to $W_{i,j}$, and $U_{m,i}$ has no submodule isomorphic to $U_{j+1,m}$; if $i=m$, $V(W_{i,j},U_{j+1,i};U_{ii})=\mathbb{C}$ and $\chi(V(W_{i,j},U_{j+1,i};U_{ii}))=1$. So $[W_{i,j}, U_{j+1,m}]=U_{i,m}$.

Case (11): For $1\leq i,j\leq n$, it is clear that $\dimv V_{i}+\dimv V_{j}=\dimv U_{i,j}$, and from the structure of $V_i$, $V_j$ and  $U_{i,j}$ in Section~\ref{s:R-of-GOA}, $U_{i,j}$ has a unique submodule isomorphic to $V_{j}$ with quotient isomorphic to $V_{i}$.
\end{proof}
\subsection{The Ringel--Hall Lie algebra}
We have shown in Theorem~\ref{Hall-exist-for-A} that the algebra $A$ has Hall polynomials. As in the end of Section~\ref{s:RHA}, let $\mathfrak{g}(A)$ be the $\mathbb{Z}$-submodule of the degenerate Ringel--Hall algebra $\mathcal{H}(A)_1$ with basis $[M]$, where $M$ runs over all indecomposable $A$-modules. The following is a corollary of Theorem \ref{Lie-alg} and Proposition \ref{Hallpoly}:

\begin{corollary}
\label{coro:lie-brackets-of-g(A)}
$\mathfrak{g}(A)$ is a Lie subalgebra of $\mathcal{H}(A)_1$ with the following Lie bracket:
\begin{itemize}
         \item[\rm (a)] $[W_{i,j}, W_{l,m}]=\delta_{j+1,l}W_{i,m}-\delta_{m+1,i}W_{l,j}$ for $1\leq i\leq j\leq n-1$ and $1\leq l\leq m\leq n-1$,
         \item[\rm (b)] $[W_{i,j}, V_l]=\delta_{j+1,l}V_i$ for $1\leq i\leq j\leq n-1$ and $1\leq l\leq n$,
         \item[\rm (c)] $[W_{i,j}, U_{l,m}]=\delta_{j+1,m}U_{l,i}+\delta_{j+1,l}U_{i,m}$ for $1\leq i\leq j\leq n$ and $1\leq l,m\leq n$,
         \item[\rm (d)] $[V_i, V_j]=U_{j,i}-U_{i,j}$ for $1\leq i,j\leq n$,
\end{itemize}
\noindent where $\delta_{i,j}$ is the Kronecker symbol. For other pairs of indecomposable modules, the Lie bracket of them equals 0.
\end{corollary}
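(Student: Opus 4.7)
The plan is to combine the two tools already assembled in this section. Theorem \ref{Lie-alg} guarantees that $\mathfrak{g}(A)$ is a Lie subalgebra of the degenerate Ringel--Hall algebra $\mathcal{H}(A)_1$, whose multiplication is $[X]\cdot [Y]=\sum_M \varphi_{X,Y}^M(1)\,[M]$. Proposition \ref{Hallpoly} exhaustively lists the nonzero Hall polynomials $\varphi_{X,Y}^M$ whose three arguments are all indecomposable; every listed polynomial is either $T$ or $1$, both of which specialise to $1$ at $T=1$. Because $\mathfrak{g}(A)$ is Lie-closed, the decomposable contributions to $[X]\cdot[Y]-[Y]\cdot[X]$ must cancel; hence to compute the Lie bracket of two indecomposables it suffices to read off the indecomposable summands of each product from Proposition \ref{Hallpoly} and subtract.

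Proceeding pair by pair, for $X=W_{i,j}$ and $Y=W_{l,m}$ only case (3) applies, giving the summand $\delta_{j+1,l}[W_{i,m}]$ of $[X]\cdot[Y]$ and, by swapping the roles, $\delta_{m+1,i}[W_{l,j}]$ of $[Y]\cdot[X]$; the difference is formula (a). For $X=W_{i,j}$ and $Y=V_l$ only case (4) is relevant, while Proposition \ref{Hallpoly} contains no triple of the form $(V_l,W_{i,j},-)$, so $[Y]\cdot[X]$ has no indecomposable contribution and (b) drops out. For $X=W_{i,j}$ and $Y=U_{l,m}$, cases (1), (2), (5), (9) jointly furnish $\delta_{j+1,l}[U_{i,m}]$ while cases (6), (7), (8), (10) furnish $\delta_{j+1,m}[U_{l,i}]$; since no case places a $U$ in the left slot, $[Y]\cdot[X]$ contributes nothing indecomposable, yielding (c). Finally, for $X=V_i$ and $Y=V_j$ only case (11) contributes, providing one summand to $[X]\cdot[Y]$ and the partner summand to $[Y]\cdot[X]$, whose difference gives (d). For every remaining pair of indecomposables, for instance two modules from the $U$-family, or a $V$ with a $U$, Proposition \ref{Hallpoly} is silent, so neither product contains any indecomposable term and the bracket vanishes.

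The main obstacle I expect is purely combinatorial rather than conceptual: formula (c) draws on seven sub-cases (1), (2), (5)--(10) whose index ranges are delicately arranged so that, for each admissible $(l,m)$ with $l=j+1$ (respectively $m=j+1$), \emph{exactly one} of the sub-cases applies and contributes coefficient $1$ at $T=1$. One must verify this case-by-case, and also check that the apparent typo in the middle entry of case (7) does not disturb the bookkeeping. Once this verification is performed, the corollary drops out by simply collecting the resulting coefficients.
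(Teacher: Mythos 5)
Your proposal is correct and follows essentially the paper's own (one-line) argument: the paper derives the corollary directly from Theorem~\ref{Lie-alg} and Proposition~\ref{Hallpoly}, which is precisely the combination you spell out. In particular, your observation that Lie-closedness of $\mathfrak{g}(A)$ forces the decomposable terms of $[X]\cdot[Y]-[Y]\cdot[X]$ to cancel, so that one only needs to collect the indecomposable summands listed in Proposition~\ref{Hallpoly} (all of whose polynomials evaluate to $1$ at $T=1$), is exactly the intended reading.
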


Using Riedtmann's construction \cite{Riedtmann94}, we have obtained in \cite{ChenYang24} a Lie algebra $L(A_{\mathbb{C}})$ from the representation theory of $A_{\mathbb{C}}$, which is isomorphic to a Lie algebra of type $\mathbf {BC}^+$. By Corollary \ref{coro:lie-brackets-of-g(A)} and \cite[Proposition 4.2]{ChenYang24}, we have
\begin{theorem}
\label{thm:iso-between-g(A)-and-L(A)}
There is an isomorphism $\mathfrak{g}(A)\cong L(A_{\mathbb{C}})$ of Lie algebras.
\end{theorem}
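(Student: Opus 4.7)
The plan is straightforward: exhibit the obvious $\mathbb{Z}$-linear bijection and verify on basis elements that it intertwines the two Lie brackets. Concretely, I would define $\phi\colon \mathfrak{g}(A) \to L(A_{\mathbb{C}})$ by sending each basis element $[M]$, for $M$ an indecomposable $A$-module, to the class of the corresponding indecomposable $A_{\mathbb{C}}$-module. By Theorem~\ref{repn}, which holds uniformly over any field $K$, the indecomposables over $A$ and over $A_{\mathbb{C}}$ are in canonical bijection via the labels $U_{i,j}$, $V_i$, $W_{i,j}$, so $\phi$ is an isomorphism of free $\mathbb{Z}$-modules.

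It then remains to check that $\phi$ is a Lie algebra homomorphism. For this I would read off the Lie brackets of $\mathfrak{g}(A)$ from Corollary~\ref{coro:lie-brackets-of-g(A)} and compare them, case by case, with the brackets of $L(A_{\mathbb{C}})$ given in \cite[Proposition 4.2]{ChenYang24}. Since both lists cover all pairs $(X,Y)$ of indecomposables (with the convention that unmentioned pairs have zero bracket), the verification amounts to pattern-matching across four families: $[W,W]$, $[W,V]$, $[W,U]$, and $[V,V]$. Brackets of type $[V,U]$, $[U,U]$, $[U,V]$ are zero on both sides, and brackets involving two copies of the same family outside those listed vanish as well.

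The key technical step in this comparison is checking that the sign and multiplicity conventions match. On our side, the brackets come from the antisymmetrization $[X,Y] = [X]\diamond[Y] - [Y]\diamond[X]$ applied to the values computed in Proposition~\ref{Hallpoly} at $q=1$, and the Hall numbers in the relevant cases are all $0$ or $1$, so no polynomial specialization issues arise. Each nonzero bracket in Corollary~\ref{coro:lie-brackets-of-g(A)} should line up with exactly one term in \cite[Proposition 4.2]{ChenYang24}; in particular one should verify that the asymmetry in case (c), where $[W_{i,j},U_{l,m}]$ picks up contributions from both $\delta_{j+1,m}$ and $\delta_{j+1,l}$, matches Riedtmann's construction which naturally produces a sum over two possible extensions.

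I expect no real obstacle: the proof is a bookkeeping exercise once the two explicit bracket tables are laid side by side. The only place where care is needed is in reconciling notation, namely confirming that the labeling of indecomposables used in \cite{ChenYang24} agrees with the one fixed in Section~\ref{s:R-of-GOA} (or else introducing the appropriate relabeling inside $\phi$), and confirming that the Riedtmann Lie bracket conventions of \cite{ChenYang24} use the same sign as our $[X,Y] = [X]\diamond[Y] - [Y]\diamond[X]$. Once these conventions are aligned, the coincidence of the two bracket tables is immediate and the theorem follows.
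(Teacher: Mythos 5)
Your proposal matches the paper's argument: the paper deduces the theorem directly by comparing the bracket table of Corollary~\ref{coro:lie-brackets-of-g(A)} (obtained from Proposition~\ref{Hallpoly} evaluated at $T=1$) with the table in \cite[Proposition 4.2]{ChenYang24}, via the evident bijection on indecomposables given by the common labels $U_{i,j}$, $V_i$, $W_{i,j}$. The bookkeeping comparison you describe is exactly what the paper does.
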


%\bibliographystyle{amsplain}
%\bibliography{stanYang}

\def\cprime{$'$}
\providecommand{\bysame}{\leavevmode\hbox to3em{\hrulefill}\thinspace}
\providecommand{\MR}{\relax\ifhmode\unskip\space\fi MR }
% \MRhref is called by the amsart/book/proc definition of \MR.
\providecommand{\MRhref}[2]{%
  \href{http://www.ams.org/mathscinet-getitem?mr=#1}{#2}
}
\providecommand{\href}[2]{#2}

\end{document}